\numberwithin{equation}{section}
\newtheorem{lemma}{Lemma}[section]
   \newtheorem{theorem}{Theorem}
   \newtheorem{re}{Remark}[section]
\newcommand{\al}{\alpha}
\newcommand{\be}{\beta}
\def\spn{\mathop{\mathrm{span}}}
\numberwithin{equation}{section}
\numberwithin{theorem}{section}
\numberwithin{prop}{section}
\numberwithin{lemma}{section}
\numberwithin{re}{section}
\numberwithin{coro}{section}
\newcommand{\R}{\mathbb{R}}
\subjclass[2000]{35Q35, 35Q51}
\keywords{stability, solitary waves,  Camassa-Holm equation, Degasperis-Procesi equation, spectrum}
\thanks{ Email: \dag liji@hust.edu.cn, \ddag yliu@uta.edu, \S {wuq@ohio.edu}. }
\begin{document}

\title[Spectral stability of Degasperis-Procesi solitary waves]{Spectral stability of smooth solitary waves for the Degasperis-Procesi Equation}
\today

\maketitle

\centerline{\scshape Ji Li$^{\,\dag}$, Yue Liu$^{\,\ddag}$ and Qiliang Wu$^{\,\S,*}$}
\medskip
{\footnotesize
\centerline{\dag. School of Mathematics and Statistics, Huazhong University of Science and Technology}
   \centerline{Wuhan, Hubei 430074, China}
 \centerline{\ddag. Department of Mathematics, University of Taxas at Arlington}
  \centerline{Arlington, TX 76019-0408, USA}
\centerline{\S. Department of Mathematics, Ohio University}
  \centerline{Athen, OH, 45701, USA}
  \centerline{$*$ the corresponding author}
   }

\begin{abstract}
The  Degasperis-Procesi equation is an approximating model of  shallow-water wave propagating mainly in one direction to the Euler equations.   Such  a model equation is analogous to the Camassa-Holm approximation of the two-dimensional incompressible and  irrotational  Euler equations with the same asymptotic accuracy,  and is integrable with the bi-Hamiltonian structure. In the present study, we establish existence and spectral stability results of localized smooth solitons to the Degasperis-Procesi equation on the real line.  The stability proof relies essentially on refined spectral analysis of the linear operator corresponding to the second-order variational derivative of the Hamiltonian of the Degasperis-Procesi equation.
\end{abstract}

\section{Introduction}
The Degasperis-Procesi (DP) equation
\begin{equation}\label{DP2k}
m_t+2ku_x+3mu_x+um_x=0,\quad x\in\R,  \;  t>0,
\end{equation}
with momentum density $m\triangleq u-u_{xx}$ and $ k > 0 $ as a parameter related to the critical shallow water speed, was originally derived by Degasperis and Procesi \cite{D-P} using the method of asymptotic integrability up to the third order as one of three equations in the
family of third-order dispersive PDE conservation laws of the form
\begin{equation*}
u_t-\alpha_2^2u_{xxt}+\alpha_2u_{xxx}+c_0u_x= \partial_x(c_1u^2+c_2u_x^2+c_3uu_{xx}).
\end{equation*}
The other two integrable equations in the family, after rescaling and applying a Galilean transformation, are the Korteweg-de Vries (KdV) equation \cite{KdV},
\begin{equation}
u_t+u_{xxx}+uu_x=0,
\end{equation}
and the Camassa-Holm(CH) shallow-water equation \cite{CH, F-F}  ({see also \cite{cola} for a rigorous justification in shallow water approximation),
\begin{equation} \label{CH}
m_t+2ku_x+2mu_x+um_x=0,\quad m=u-u_{xx}.
\end{equation}
The DP equation is also  an approximation to the incompressible Euler equations for shallow
water  and its asymptotic accuracy is
the same as that of the CH shallow-water equation \cite{cola} in the CH scaling, where the solution $ u(t, x)  $ of \eqref{DP2k} represents the horizontal velocity field at height $ z_0 = \sqrt{\frac{23}{36}}$ after the re-scaling within $ 0 \leq z_0 \leq 1$
at time t in the spatial $x-$direction with momentum density $m $.

The DP equation \eqref{DP2k} has an apparent similarity to the CH equation \eqref{CH}, and both of them are important model equations for shallow
water waves with breaking phenomena, i.e., the wave remains
bounded but its slope becomes unbounded \cite{C-Ep, C-E, L-Y, Wh}. However, there was very little   known about qualitative properties and long-time dynamics of the DP equation, and what was known about the CH equation can not be directly applied to the DP equation, due to major structural differences between the DP equation and the CH equation. For instance, the isospectral problems in the Lax pair for the DP equation \eqref{DP2k} and the  CH equation are respectively  a third-order equation \cite {D-H-H}
\[
\psi_x-\psi_{xxx}-\lambda m\psi=0,
\]
and a second-order
equation \cite {CH}
\[\psi_{xx}-\frac{1}{4}\psi-\lambda m\psi=0,
\]
where $m=u-u_{xx}$ in both cases.
Moreover, the CH equation is a re-expression of geodesic flow on
the diffeomorphism group \cite{C-K} and on the Bott-Virasoro group \cite{Mi}, while no
such geometric derivation of the DP equation is available.

When it comes to solitons, the main focus of this work, it is well-known that the KdV equation is an integrable Hamiltonian equation
that possesses smooth solitons as traveling waves. In the KdV equation, the
leading-order asymptotic balance that confines the traveling wave solitons occurs
between nonlinear steepening and linear dispersion. On the other hand, the nonlinear dispersion
and nonlocal balance in the CH equation and the DP equation, can still produce confined solitary traveling waves. There are two scenarios, though, depending on the value of $k$.

In the limiting case of vanishing linear dispersion ($k=0$), smooth solitary waves become peaked solitons, called peakons. More specifically, when $k=0$, the CH equation can be written as
\begin{equation}\label{CH0}
u_t+\partial_x(\frac{1}{2}u^2+\frac{1}{2}\phi*(\frac{1}{2}u_x^2+u^2))=0,\quad t>0, \; \, x\in\R,
\end{equation}
and the DP equation as
\begin{equation}\label{DP}
u_t+\partial_x(\frac{1}{2}u^2+\frac{1}{2}\phi*(\frac{3}{2}u^2))=0,\quad t>0, \; \, x\in\R,
\end{equation}
where $ \phi(x) = e^{-|x|} $ and $``* "$ stands for convolution with respect to the spatial variable $x\in\R$. Peakons are weak solutions of these conservation laws and are true solitons that interact via elastic collisions respectively under the CH dynamics and the DP dynamics.  Moreover, as a fundamental qualitative property in nonlinear dynamics, the orbital stability of peakons of the CH and DP equation has been verified \cite {C-S, L-L}. Relevant stability results for waves approximating peakons are also available \cite {CM}.
However, the DP equation distinguishes from the
CH equation substantially, mainly in the corresponding conservation laws. Much more sophisticated arguments are used in \cite{L-L} to overcome the much weaker $L^2$ conservation law presented in the DP equation.
Another novel feature of the DP equation is that for $k=0$,  not only does it have peaked solitons \cite{CH, D-H-H} of the form  $ u(t,x)=ce^{-| x-ct|},  \, c \in \mathbb {R}$,  it also admits shock peakons \cite{E-L-Y, Lu} of the form
$$ u(t,x)=-\frac{1}{t+a }\text{sgn}(x)e^{-| x|},\, a >0.
$$ It is not  clear if such a discontinuous solution is stable or not in proper settings.

In the case of non-vanishing linear dispersion ($k \neq 0$), the existence and stability of localized smooth solitary waves of the CH  equation \eqref{CH} are well understood by now \cite {C-H-H, C-S3}, while the DP equation case has been barely explored so far and thus the subject of this paper. The goal of this paper is to establish existence and spectral stability results of smooth solitons for the DP equation \eqref{DP2k}.

We start with a rigorous definition of solitary waves, i.e., solitons.  Firstly, a solution of the DP equation $u(t,x)$ is a \textit{traveling wave} if there exist a real number $c$ and a scalar function $\phi: \R\to\R$ such that
\[
u(t,x)=\phi(x-ct).
\]
Moreover, a traveling wave of the DP equation $\phi(x-ct)$ is a \textit{solitary wave} if there exists $\xi_0\in\R$ such that
\begin{itemize}
\item $\max\limits_{\xi\in\R}\phi(\xi)=\phi(\xi_0)$ and $\lim\limits_{\xi\to\pm\infty}\phi(\xi)=0$.
\item $\phi$ is strictly increasing on $(-\infty,\xi_0)$ and strictly decreasing on $(\xi_0,\infty)$.
\end{itemize}
We give the existence result below,  refer to Section \ref{s:2} for further reading and move on directly to the discussion of the essential topic--the stability issue.
\begin{theorem}[existence]\label{profile}
Under the physical condition $c>2k>0$, there exists, up to translations, a unique $c-$speed solitary wave $\phi(\xi; c)$ with its maximum height $$\frac{c-2k}{4}<\phi_{max}\triangleq \max\limits_{\xi\in\R} \{\phi\}<c-2k.$$ In addition, the function $\phi(\xi; c)$ could be taken even and strictly monotonically increases from $0$ to $\phi_{max}$ for negative values of $\xi$.
\end{theorem}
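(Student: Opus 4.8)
The plan is to reduce the solitary-wave problem to a planar ODE and to recover the profile by quadrature. Substituting $u(t,x)=\phi(\xi)$ with $\xi=x-ct$ and $m=\phi-\phi''$ into \eqref{DP2k} gives $(\phi-c)\,m'+3\phi'm+2k\phi'=0$. Multiplying by $(\phi-c)^2$ converts the first two terms into $\bigl((\phi-c)^3m\bigr)'$ and the last into $\tfrac{2k}{3}\bigl((\phi-c)^3\bigr)'$, so one integration, together with $\phi,\phi',\phi''\to0$ as $\xi\to\pm\infty$, yields $(\phi-c)^3\!\bigl(m+\tfrac{2k}{3}\bigr)=-\tfrac{2k}{3}c^3$, that is, the autonomous second-order equation $\phi''=\phi+\tfrac{2k}{3}+\tfrac{2kc^3}{3(\phi-c)^3}$. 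Multiplying this by $\phi'$ and integrating once more, again using the decay at infinity to fix the constant, produces the first integral
\[
(\phi')^2=\frac{\phi^2\,F(\phi)}{3(\phi-c)^2},\qquad F(\phi)=3\phi^2+(4k-6c)\phi+3c(c-2k).
\]

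First I would study the quadratic $F$. Under $c>2k>0$ one computes $F(0)=3c(c-2k)>0$, $F\!\left(\tfrac{c-2k}{4}\right)=\tfrac{1}{16}(c-2k)(27c+10k)>0$, $F(c-2k)=-2k(c-2k)<0$, and the discriminant $16k^2+24kc>0$; hence $F$ has exactly one root $\phi_{max}$ in $\bigl(\tfrac{c-2k}{4},\,c-2k\bigr)$, it is the smaller of the two roots (both positive, since their sum $(6c-4k)/3$ and product $c(c-2k)$ are positive), $F>0$ on $[0,\phi_{max})$, $F'(\phi_{max})<0$, and $\phi_{max}<c-2k<c$, so the right-hand side above is smooth for $\phi\in[0,\phi_{max}]$.

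Next I would construct the profile. Normalizing the crest to $\xi_0=0$, define $\phi$ on $(-\infty,0]$ implicitly by $\int_{\phi}^{\phi_{max}}\frac{\sqrt3\,(c-s)}{s\sqrt{F(s)}}\,ds=-\xi$. Near $s=\phi_{max}$ the integrand is $O\!\bigl((\phi_{max}-s)^{-1/2}\bigr)$, hence integrable, so $\phi$ reaches $\phi_{max}$ at the finite value $\xi=0$ with $\phi'(0)=0$; near $s=0$ the integrand behaves like $\tfrac{c}{s\sqrt{3c(c-2k)}}$, so the integral diverges logarithmically and $\phi$ tends to $0$ only as $\xi\to-\infty$, with $\phi(\xi)\sim e^{\sqrt{(c-2k)/c}\,\xi}$ — the exponential tail of a smooth soliton. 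Thus $\phi$ is a smooth, strictly increasing solution of $\phi'=\phi\sqrt{F(\phi)}/(\sqrt3\,(c-\phi))>0$ on $(-\infty,0]$ with $\phi(0)=\phi_{max}$, $\phi'(0)=0$, $\phi(-\infty)=0$. Since the autonomous equation $\phi''=\phi+\tfrac{2k}{3}+\tfrac{2kc^3}{3(\phi-c)^3}$ has a right-hand side that is smooth in $\phi$ on $[0,\phi_{max}]\subset(-\infty,c)$ and is invariant under $\xi\mapsto-\xi$, the solution of the Cauchy problem with data $(\phi(0),\phi'(0))=(\phi_{max},0)$ is unique, $C^\infty$, even, and global; it coincides with the branch just built on $(-\infty,0]$, and $\phi''(0)=\tfrac{\phi_{max}^2F'(\phi_{max})}{6(\phi_{max}-c)^2}<0$ shows $\xi=0$ is a nondegenerate maximum. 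Hence $\phi$ is an even $c$-speed solitary wave, strictly increasing on $(-\infty,0)$, strictly decreasing on $(0,\infty)$, with $\tfrac{c-2k}{4}<\phi_{max}<c-2k$, as claimed.

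For uniqueness up to translation, note that any $c$-speed solitary wave satisfies the reduced first integral with precisely these constants (forced by the decay at $\pm\infty$), and at its crest $\phi'$ vanishes, so the crest value is a root of $\phi^2F(\phi)$; it cannot be $0$, and under $c>2k>0$ the polynomial $F$ has no nonpositive roots, so the crest value is $\phi_{max}$ and the orbit is the homoclinic loop constructed above, hence a translate of $\phi$. The step I expect to be the main obstacle is the endpoint analysis of the quadrature: proving the logarithmic divergence at $s=0$ (which both forces the orbit to be homoclinic to the rest state and gives the exponential localization, ruling out a compacton or a cusped wave) and the square-root integrability at $s=\phi_{max}$, and then checking that the even reflection across the crest is genuinely $C^\infty$ — all of which depend on the sign inequalities made available exactly by the physical condition $c>2k>0$.
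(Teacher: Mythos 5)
Your reduction is, up to normalization, exactly the paper's: your first integral $(\phi')^2=\phi^2F(\phi)/\bigl(3(c-\phi)^2\bigr)$ is identity \eqref{firstintegral} with $F=6P$, your roots are the paper's $\phi_\pm$, and your quadrature/endpoint analysis plus the even-reflection argument at the crest are sound (the tail rate $\sqrt{(c-2k)/c}$ is correct, though near $s=0$ the integrand limit is $\tfrac{1}{s}\sqrt{c/(c-2k)}$ --- you dropped a factor $\sqrt{3}$ in the numerator; this is harmless). The difference in route is cosmetic: the paper applies $4-\partial_\xi^2$ to the nonlocal stationary equation and reads the conclusions off the geometry of the level curve $\Phi=0$, while you integrate the traveling-wave equation directly with the integrating factor $(\phi-c)^2$ and make the quadrature and asymptotics explicit.

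The one step that does not hold as written is in your uniqueness argument: from ``the crest value is a positive root of $\phi^2F$'' and ``$F$ has no nonpositive roots'' you conclude the crest is $\phi_{max}$, but $F$ has a second positive root $\phi_+=c-\tfrac{2}{3}k+\sqrt{\tfrac{2}{9}k(3c+2k)}>c$, which you never exclude. The exclusion is what the paper supplies by noting that $\{\Phi=0\}$ has two connected components, one in $\phi\le\phi_-$ and one in $\phi\ge\phi_+$, and only the former is attached to the origin: a wave rising from $0$ to a crest at $\phi_+$ would have to pass through values $\phi\in(\phi_{max},\phi_+)$, where $F(\phi)<0$ forces $(\phi')^2<0$, which is impossible (and $\phi=c$ cannot occur on this level set since $F(c)=-2kc\neq0$). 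With that one line added, the crest must be $\phi_{max}$, the orbit is your homoclinic loop, and the proof is complete.
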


Thanks to the translation invariance of the equation, for any given solitary wave $\phi(\xi; c)$, its spatial translation generates a family of solutions, called \textit{the orbit of the solitary wave}, denoted as
\[
O_c=\{\phi(\cdot+x_0; c) \mid \; x_0\in\R\}.
\]
Moreover, a solitary-wave solution $\phi$ of the DP  equation is called {\it orbitally stable} if a wave starting close to the solitary wave $ \phi$ remains
close to the orbit of the solitary wave up to the existence time. A generic feature of nonlinear dispersive equations is that their solutions usually tend to be oscillations that, as time evolves, spread out spatially in a significantly nonlinear and complicated way. When it comes to solitary waves, one would naively expect that a small perturbation of a solitary wave would at least yield another one with a different speed and phase shift, if not more complicated, which makes the stability of solitary waves counter-intuitive and thus fascinating.

Another weaker form of stability is called {\it spectral stability}. A solitary wave is called  {\it spectrally stable} if the corresponding linearized equation admits no exponentially unstable solution.

The study of stability is essentially based on the Hamiltonian structure of the DP equation applying some general index counting theorem from \cite{LZ}. Actually, the DP equation \eqref{DP2k} in terms of $u$, that is,
\begin{equation}
u_t-u_{xxt}+2ku_x-3u_xu_{xx}-uu_{xxx}+4uu_x=0,
\end{equation}
can be written as an infinite dimensional Hamiltonian PDE, that is,
\begin{equation}\label{DP}
u_t=J\frac{\delta H}{\delta u}(u),
\end{equation}
where $$J\triangleq\partial_x(4-\partial_x^2)(1-\partial_x^2)^{-1},\quad H(u)\triangleq-\frac{1}{6}\int \left ( u^3+6k \left ((4-\partial_x^2)^{-\frac{1}{2}}u \right )^2 \right ) \, dx. $$
It  is observed  that some relevant conservation laws of the DP equation \eqref{DP2k} are generically weaker than those of the CH equation \eqref{CH}. More specifically, there are at least three relevant conservation laws of \eqref{DP2k} in study of stability---the conservation of momentum $M(u)$, the Hamiltonian $H(u)$, the conserved quantity $S(u)$ arising from the translation symmetry, respectively taking the following forms.
\begin{equation*}
M(u)=\int_\R(1-\partial_x^2)u\,dx, \quad\quad H(u)=-\frac{1}{6}\int_\R \left (u^3+6ku\cdot
(4-\partial_x^2)^{-1}u \right ) \,dx,
\end{equation*}
\begin{equation}\label{E3DP}
S(u)=\frac{1}{2}\int_\R(1-\partial_x^2)(4-\partial_x^2)^{-1}u\cdot u\,dx,
\end{equation}
while the corresponding ones of the CH equation  \eqref{CH} are the following,
\begin{equation*}
M(u)=\int_\R(1-\partial_x^2)u\,dx, \quad\quad \widetilde{H}(u)=\int_\R \left (u^3+uu_x^2+2ku^2 \right ) \,dx,
\end{equation*}
\begin{equation}\label{E3CH}
\widetilde{S}(u)=\int_\R \left ( u^2+u_x^2 \right ) \,dx.
\end{equation}

\begin{re}
Using Kato's theorem \cite {K},   it is known \cite {Y1} that  if  initial profiles $  u_0 \in  H^s(\R) $ with $s > \frac{3}{2}, $ \eqref{DP2k} has a unique solution in $ C([0, T); H^s(\R)) $ for some $ T > 0 $ with $ M,H $ and $S$ all  conserved.  Moreover, the
only way that a classical solution of equation \eqref{DP2k} fails to exist for all time is that the
wave breaks. Some solutions of \eqref{DP2k} are defined globally in time (e.g. the smooth
solitary waves) while other waves break in finite time \cite {L-Y}.
\end{re}
\begin{re} While it is straightforward to verify that $M$ and $H$ are conserved quantities, the verification of the conservation of $S$ under the flow is relatively nontrivial. In fact, the conservation of $S(u)$ holds as long as the Hamiltonian density at spatial infinity equal to zero. More specifically, for any solution $u(t,x)$ to the DP equation with initial condition $u(0, \cdot)\in H^s(\R)$ with $s>3/2$, the solution $u(t,x)$ is continuous in $x$ with $\lim\limits_{x\to\pm\infty}u(t,x)=0$ and
\begin{equation*}
\begin{aligned}
\frac{d S}{d t}&=((1-\partial_x^2)(4-\partial_x^2)^{-1}u, u_t)=((1-\partial_x^2)(4-\partial_x^2)^{-1}u, J\frac{\delta H}{\delta u}(u))\\
&=-(\partial_x u, \frac{\delta H}{\delta u}(u))=\int_{\R} \partial_x h(u(t,x))dx\\
&=h(u(t,\infty))-h(u(t,-\infty))=h(0)-h(0)=0,
\end{aligned}
\end{equation*}
where $h(u)=-\frac{1}{6}\left[u^3+6k \left ((4-\partial_x^2)^{-\frac{1}{2}}u \right )^2\right]$ is the Hamiltonian density.
\end{re}
In particular, one can see that the conservation law $S$ for the DP equation is equivalent to $\Vert u\Vert_{L^2}^2$.  In fact, by the Fourier transform, we have
\begin{equation*}
S(u)= \frac{1}{2} \int_\R (1 - \partial_x^2) u (4-\partial_x^2)^{-1}udx= \frac{1}{2} \int_\R\frac{1+\xi^2}{4+\xi^2}\vert\hat u
(\xi)\vert^2d\xi\sim\Vert\hat u\Vert_{L^2}^2=\Vert u\Vert_{L^2}^2.
\end{equation*}

Due to such a weaker conservation law $S$ for the DP equation, compared with $\widetilde{S}$ of the CH case, we can only expect spectral (or orbital) stability of solitons in the sense of the $L^2$ norm, which makes the study of the stability of smooth DP solitons much more subtle.

In fact, taking advantage of the fact that the conserved energy $\widetilde{S}$ in \eqref{E3CH} of the CH equation is $H^1$ norm of the solution and fixed sign of the momentum density, the variational framework by Grillakis, et.al. \cite{GSS} can be successfully applied without too much trouble to obtain orbital stability of smooth CH solitons \cite{CM}. More specifically,
\begin{itemize}
\item  According to the conservation law of momentum $M$, the CH skew symmetric operator
\[
J_{CH}=-\partial_x(1-\partial_x^2)^{-1}
\]
is bounded and invertible when restricted to the zero-average co-dimensional one subspace.

\item By the Liouville substitution,  the linearized operator
\[
\mathcal{L}_{CH}=-\partial_x((2c-2\phi)\partial_x)-6\phi+2\phi''+2(c-k)
\]
with respect to the soliton $ \phi $, defined on the space $H^2(\R)$, is transformed into a regular self adjoint Sturm-Liouville operator, which is, as one readily sees, a relatively compact perturbation of a second order differential operator with constant coefficients. The required spectral properties of $\mathcal{L}_{CH}$ then follows directly from the Sturm-Louville theory.

\item  The corresponding convexity condition is easily verified in the CH soliton case which takes big advantage of the simple form of the conservation law $\widetilde{E}_3$.

\item  The strong $L^2$ coercivity on restrained space can be lifted to $H^1$, so as to control the remaining nonlinear part and to obtain orbital stability.

\end{itemize}
As for the orbital stability of smooth DP solitons, there are several obstacles to tackle.
\begin{itemize}
\item  The corresponding  DP skew symmetric operator
\[
J_{DP}=-\partial_x(4-\partial_x^2)(1-\partial_x^2)^{-1}
\]
is not bounded invertible. This obstacle is mild, since the generator of the translation symmetry is $\partial_x$, annihilating the unbounded part $\partial_x^{-1}$ in the pseudo inversion $J_{DP}^{-1}$ and making $J_{DP}^{-1}\partial_x$ bounded invertible, as in the KdV case.
\item The corresponding linearized operator
\[
\mathcal{L}_{DP}=(c-2k-c\partial_\xi^2)(4-\partial_\xi^2)^{-1}-\phi
\]
fails to directly transform into a regular self-adjoint Sturm-Liouville type  operator, so the study of its spectral properties becomes highly nontrivial. While the essential spectrum and the simplicity of the eigenvalue $0$ of the operator $\mathcal{L}_{DP}$ can be readily obtained via a neat and brief functional analysis argument, the analysis on negative eigenvalues roots deep in dynamical systems and is technical. 
%
%

\item The verification of the convexity of the Lagrangian evaluated at solitary wave profiles with respect to the wave speed $ c $ is also highly nontrivial and relies substantially on the special structure of the DP equation.

\item  The strong $L^2$ coercivity (to be proved for the DP soliton in this paper) on restrained space can NOT be lifted to $H^1$, essentially because of the weaker conservation law $S$ of DP equation which leads to consideration of an non-differential operator $\mathcal{L}_{DP}$.
\end{itemize}
We succeed in tackling the first three listed obstacles and have the following main result.

\begin{theorem}\label{stability}
Let $u^c(t,x)$ be the  solitary-wave solution of \eqref{DP2k} with its traveling speed $c > 2k$. Such a solitary-wave $u^c(t,x)$ is spectrally stable in $L^2(\mathbb{R})$.
\end{theorem}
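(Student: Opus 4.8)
The plan is to recast the linearized Degasperis--Procesi flow about the soliton $\phi=\phi(\cdot\,;c)$ of Theorem~\ref{profile} as a linear Hamiltonian system and then to invoke the index-counting theorem of \cite{LZ}. Substituting $u(t,x)=\phi(\xi)+v(t,\xi)$ in the moving frame $\xi=x-ct$ into the Hamiltonian form $u_t=J\frac{\delta H}{\delta u}(u)$, using $J\frac{\delta S}{\delta u}=\partial_\xi$ together with the profile identity $\mathcal{E}_c'(\phi)=0$ for $\mathcal{E}_c\triangleq H+cS$ (which follows from $-c\phi'=J\frac{\delta H}{\delta u}(\phi)$), and retaining the linear terms, one obtains $v_t=J\mathcal{L}_{DP}v$ with $J=\partial_\xi(4-\partial_\xi^2)(1-\partial_\xi^2)^{-1}$ and
\[
\mathcal{L}_{DP}=\mathcal{E}_c''(\phi)=(c-2k-c\partial_\xi^2)(4-\partial_\xi^2)^{-1}-\phi .
\]
Spectral stability is precisely the statement $\sigma(J\mathcal{L}_{DP})\cap\{\operatorname{Re}>0\}=\emptyset$. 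Since $J$ factors through the translation generator $\partial_\xi$, the unbounded factor $\partial_\xi^{-1}$ in the pseudo-inverse is annihilated and $J^{-1}\partial_\xi$ is bounded and invertible on the relevant codimension-one subspace; this places the problem in the abstract framework of \cite{LZ}, which reduces the count of unstable modes of $J\mathcal{L}_{DP}$ to two ingredients: (i) the Morse index $n(\mathcal{L}_{DP})$ and the kernel $\ker\mathcal{L}_{DP}$; and (ii) a finite-dimensional quadratic form attached to the generalized kernel of $J\mathcal{L}_{DP}$.

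First I would determine the spectral structure of $\mathcal{L}_{DP}$, which I expect to be the main obstacle. The identity $(c-2k-c\partial_\xi^2)(4-\partial_\xi^2)^{-1}=c-(3c+2k)(4-\partial_\xi^2)^{-1}$ exhibits $\mathcal{L}_{DP}$ as a bounded self-adjoint operator on $L^2(\R)$ equal to the Fourier multiplier $c-(3c+2k)(4+\xi^2)^{-1}$ plus multiplication by the decaying function $-\phi$; a standard Weyl-sequence argument then gives $\sigma_{\mathrm{ess}}(\mathcal{L}_{DP})=[(c-2k)/4,\,c]$, which is a positive interval under $c>2k>0$, so in particular $0$ does not lie in the essential spectrum. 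To count the eigenvalues below $(c-2k)/4$ I would apply $(4-\partial_\xi^2)$ to $\mathcal{L}_{DP}\psi=\lambda\psi$, which is an equivalence on $L^2$ (an $L^2$ solution of either formulation is smooth and decaying), and rewrite it as the second-order ODE
\[
(\phi-c+\lambda)\psi''+2\phi'\psi'+\bigl(\phi''+c-2k-4\phi-4\lambda\bigr)\psi=0 .
\]
Since $\phi_{\max}<c-2k<c$ by Theorem~\ref{profile}, the leading coefficient $\phi-c+\lambda$ is strictly negative for every $\lambda\le0$, so in this range the problem is a genuine regular Sturm--Liouville one and $L^2$-eigenfunctions correspond to its exponentially decaying solutions. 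Translation invariance of \eqref{DP2k} gives $\mathcal{L}_{DP}\phi'=0$, and by Theorem~\ref{profile} the even profile $\phi$ has a single critical point, so $\phi'$ changes sign exactly once; Sturm oscillation theory then identifies $\phi'$ as the second eigenfunction from the bottom, whence $\ker\mathcal{L}_{DP}=\spn\{\phi'\}$ and $n(\mathcal{L}_{DP})=1$. Making this rigorous---in particular the exponential decay of $\phi'$ and of $\partial_c\phi$, and the precise dictionary between the ODE and the operator---is the technically delicate part of the proof.

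It remains to check the convexity (slope) condition. Differentiating $\mathcal{E}_c'(\phi^c)=0$ in $c$ gives $\mathcal{L}_{DP}\partial_c\phi^c=-\tfrac{\delta S}{\delta u}(\phi^c)$, hence $\langle\mathcal{L}_{DP}\partial_c\phi^c,\partial_c\phi^c\rangle=-\tfrac{d}{dc}S(\phi^c)=-d''(c)$, where $d(c)\triangleq\mathcal{E}_c(\phi^c)$ and $d'(c)=S(\phi^c)$ by the envelope identity; moreover $\partial_\xi\phi=J\tfrac{\delta S}{\delta u}(\phi)$ yields $J\mathcal{L}_{DP}(-\partial_c\phi^c)=\phi'$, so $J\mathcal{L}_{DP}$ has a size-two Jordan block at $0$ spanned by $\phi'$ and $\partial_c\phi^c$, and the reduced finite-dimensional form of \cite{LZ} is the $1\times1$ matrix $[-d''(c)]$. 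Exploiting the first integral of the profile equation that underlies Theorem~\ref{profile}, together with the resulting dependence of $\phi_{\max}$ and of the asymptotic decay rate on $c$, I would write $S(\phi^c)\sim\|\phi^c\|_{L^2}^2$ as an explicit quadrature in $c$ and prove $d''(c)>0$ for all $c>2k$; this is the step that leans heavily on the special algebraic structure of the DP equation. With $d''(c)>0$ the reduced form $[-d''(c)]$ has Morse index $1$, so the index theorem of \cite{LZ} gives
\[
k_r+2k_c+2k_i^-=n(\mathcal{L}_{DP})-1=0 ,
\]
where $k_r$, $k_c$, $k_i^-$ count, respectively, the positive real eigenvalues, the complex quadruples, and the purely imaginary pairs of negative Krein signature of $J\mathcal{L}_{DP}$. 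In particular $k_r=0$, so $J\mathcal{L}_{DP}$ has no eigenvalue with positive real part; since its essential spectrum lies on $i\R$ (at spatial infinity $J\mathcal{L}_{DP}$ degenerates to the skew-adjoint multiplier with symbol $i\xi(c-2k+c\xi^2)(1+\xi^2)^{-1}$), the linearization admits no exponentially growing solution, i.e., the solitary wave $u^c$ is spectrally stable in $L^2(\R)$.
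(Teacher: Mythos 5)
Your overall architecture coincides with the paper's: linearize in the traveling frame, identify $\mathcal{L}_{DP}=L_c=c-\phi-(3c+2k)(4-\partial_\xi^2)^{-1}$, apply the Lin--Zeng index machinery, exhibit $\partial_c\phi$ in the generalized kernel with $(L_c\partial_c\phi,\partial_c\phi)=-\frac{d}{dc}S(\phi^c)$, and prove $\frac{d}{dc}S(\phi^c)>0$ by an explicit quadrature from the first integral (the paper carries this computation out in full; you only sketch it, but the strategy is the right one). The genuine gap is in the step you dismiss as routine: the count $n^-(L_c)=1$ and $\ker L_c=\spn\{\phi'\}$. After applying $4-\partial_\xi^2$, the eigenvalue problem $L_c\psi=\lambda\psi$ becomes
\[
(\phi-c+\lambda)\psi''+2\phi'\psi'+\bigl(\phi''+c-2k-4\phi-4\lambda\bigr)\psi=0,
\]
in which the spectral parameter $\lambda$ multiplies the second-derivative term as well as the zeroth-order term; equivalently it is the generalized problem $\mathcal{A}\psi=\lambda(4-\partial_\xi^2)\psi$ with an operator (not scalar) weight. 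This is a $\lambda$-nonlinear pencil, not a Sturm--Liouville problem of the form $-(p\psi')'+q\psi=\lambda w\psi$ with fixed $p>0$ and scalar weight $w>0$, so the classical oscillation theorem you invoke (``$\phi'$ has one zero, hence it is the second eigenfunction, hence exactly one eigenvalue below $0$'') does not apply as stated. This is precisely the obstruction the paper highlights (``$\mathcal{L}_{DP}$ fails to directly transform into a regular self-adjoint Sturm--Liouville operator'') and the reason its Section 3 develops a shooting/Pr\"ufer-angle argument: after the substitution $p=(4-\partial_\xi^2)^{-1}v$ one gets $p''=A(\xi,\lambda)p$ with $A(\xi,\lambda)=\frac{c-2k-4\phi-4\lambda}{c-\phi-\lambda}$, and the eigenvalue count rests on the monotonicity $\partial_\lambda A<0$ (hence strict monotonicity of the Pr\"ufer angle $\theta^u(0,\lambda)$ in $\lambda$) together with the boundary values $\theta^u(0,\lambda_0)>0$ and $\theta^u(0,0)=-\pi/2$, the latter requiring a separate sign analysis showing that $(4-\partial_\xi^2)^{-1}\phi'$ vanishes exactly once. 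Some replacement for this monotonicity-in-$\lambda$ analysis is indispensable; without it your Morse-index claim is an assertion, not a proof, and it is the heart of the theorem.

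A secondary inaccuracy: $\sigma_{ess}(L_c)=[(c-2k)/4,\,c]$ is false in general. Multiplication by the decaying function $-\phi$ is \emph{not} a relatively compact perturbation of the bounded, non-elliptic Fourier multiplier $c-(3c+2k)(4+\xi^2)^{-1}$; high-frequency Weyl sequences localized near a point $x_0$ show that every value $c-\phi(x_0)$ lies in the essential spectrum, so $\sigma_{ess}(L_c)\supseteq[c-\phi_{max},c]$, and for $c/k$ large one has $c-\phi_{max}<(c-2k)/4$. What survives, and is all the argument needs, is that the essential spectrum is positive and bounded away from $0$ (since $c-\phi_{max}>2k>0$); the paper obtains exactly this qualitative statement by a Fredholm-perturbation argument on the reduced ODE rather than by the relative-compactness reasoning you suggest. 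Finally, your assertion that the generalized kernel of $JL_c$ is exactly the size-two Jordan chain $\{\phi',\partial_c\phi\}$ is also unproved; the paper avoids needing it by combining the Lin--Zeng inequality $k_0^{\leq 0}\leq n^-(L_c)=1$ with the single nonpositive direction $\partial_c\phi$, which forces $k_0^{\leq 0}=1$ and yields spectral stability directly from their Corollary 2.2.
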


\begin{re}
As explained above that the strong $L^2$ coercivity can not be lifted to $H^1$ because of the special form of the linear operator $\mathcal{L}_{DP}$. One can actually prove that the strong  $H^1$ coercivity fails on corresponding restrained space by performing parallel spectral analysis of $\mathcal{L}_{DP}$ viewed as an operator on $H^1$ which admits positive continuous spectrum touching $0$.
\end{re}
\begin{re}
While the term $\int u^3\mathrm{d}x$ can be bounded by the $H^1$-norm of $u$, it generically can not be bounded by the $L^2$-norm of $u$. As a result, we note that, 
in order to obtain orbital stability results of DP smooth solitons, a remedy is to establish the stability in the $L^2$ and $L^\infty$ norm simultaneously. For peaked soliton $ \varphi = c e^{-|x-ct|} $ with $ k = 0, $ the control of the $L^\infty$ norm is reduced to bound the pointwise distance between the peakon's maximum and the maximum height of the perturbed profile, which is related to the the perturbation in $ L^2$-norm. This could be observed from the following formula derived in \cite {L-L},
\[
S(u) - S(\varphi) = S(u - \varphi(\cdot- \xi)) + 2 c \left(
v_{u}(\xi) - v_{\varphi}(0) \right),
\]
where $v_{u} = (4 - \partial_{x}^{2})^{-1} u $ and $ \xi \in \mathbb{R}. $ However, it seems not easy  to control the $L^\infty$ norm in the case of smooth solitons, leaving the orbital stability of these smooth solitons as an open problem.
\end{re}

The remainder of the paper is organized as follows. In Section \ref{s:2} we establish the existence  and properties of the smooth solitary-wave solutions (Theorem \ref{profile}). In Section \ref{s:3}, we study the spectrum of the corresponding linear operator of the second-order variational derivative of the Hamiltonian. In Section \ref{s:4} we give the proof of the spectral stability result (Theorem \ref{stability}), 

\bigskip

\section{Smooth Degasperis-Procesi Solitons}\label{s:2}

In this section, we study smooth solitary wave of \eqref{DP},
\[
u_t=J\frac{\delta H}{\delta u}(u),
\]
where $$J=\partial_x(4-\partial_x^2)(1-\partial_x^2)^{-1},\quad H(u)=-\frac{1}{6}\int \left ( u^3+6k \left ((4-\partial_x^2)^{-\frac{1}{2}}u \right )^2 \right ) \, dx. $$
Changing  the $(t,x)$ coordinates into the traveling frame $(t,\xi)$  with $\xi\triangleq x-ct$ and slightly abusing the notation by denoting $u(t, \xi)\triangleq u(t, x-ct)$,  the equation \eqref{DP} is now written as
\begin{equation}\label{travelingDP}
u_t=J\frac{\delta H}{\delta u}(\phi)+cu_\xi=J(\frac{\delta H}{\delta u}(u)+c\frac{\delta S}{\delta u}(u)),
\end{equation}
where we recall $$S(u)=\frac{1}{2}\int(1-\partial_\xi^2)(4-\partial_\xi^2)^{-1}u\cdot ud\xi.$$
Introducing the Lagrangian
\begin{equation}\label{lag}
Q_c(u)\triangleq H(u)+ c S(u),
\end{equation}
the solitary wave with speed $c>0$, denoted as $\phi(\xi; c)$, is a steady state of \eqref{travelingDP} and a critical point of the Lagrangian, namely,
\begin{equation}\label{sttDP}
\frac{\delta Q_c}{\delta u}(\phi)=\frac{\delta H}{\delta u}(\phi)+c\frac{\delta S}{\delta u}(\phi)=0.
\end{equation}

We now prove Theorem \ref{profile}.
%
\begin{proof}[Proof of Theorem \ref{profile}]
The stationary traveling DP equation \eqref{sttDP} is equivalent to
\begin{equation}\label{traveling}
-[\frac{1}{2}\phi^2+(4-\partial_\xi^2)^{-1}2k\phi]+c(1-\partial_\xi^2)(4-\partial_\xi^2)^{-1}\phi=0.
\end{equation}
Applying $4-\partial_\xi^2$, we get
\[
-2\phi^2+\phi\phi_{\xi\xi}+\phi_\xi^2 -2k\phi+c(\phi-\phi_{\xi\xi})=0,
\]
In terms of a system of first order ODEs, we have
\begin{equation}
 \left\{\begin{array}{rl}
\phi_\xi&=\psi\\
(c-\phi)\psi_\xi&=(c-2k)\phi-2\phi^2+\psi^2,\\
 \end{array}
 \right.
\end{equation}
which has a first integral
\[
\Phi(\phi,\psi)=\phi^2(\frac{1}{2}\phi^2-c\phi+\frac{2}{3}k\phi+\frac{1}{2}c^2-kc)-\frac{1}{2}(c-\phi)^2\psi^2.
\]
A solitary wave of \eqref{travelingDP} corresponds to the $\phi$ entry of the connected component of the level curve
\[
\Phi(\phi,\psi)=\Phi(0,0)=0,
\]
which connects to the origin.
Any point $(\phi, \psi)$ on the level curve $\Phi(\phi,\psi)=0$ satisfies
\begin{equation}\label{firstintegral}
\phi^2(\frac{1}{2}\phi^2-c\phi+\frac{2}{3}k\phi+\frac{1}{2}c^2-kc)=\frac{1}{2}(c-\phi)^2\phi_\xi^2,
\end{equation}
and thus the quadratic polynomial, $P(\phi)\triangleq
\frac{1}{2}\phi^2-c\phi+\frac{2}{3}k\phi+\frac{1}{2}c^2-kc$, is nonnegative. Noting that, given $c>2k$, the polynomial $P(\phi)$ admits two distinctive positive real roots, denoted as $\phi_\pm$, admitting the following expressions
\[
\phi_\pm=c-\frac{2}{3}k\pm\sqrt{\frac{2}{9}k(3c+2k)}>0,
\]
we conclude that the level curve $\Phi(\phi,\psi)=0$ has two connected components, lying respectively within the region $\phi\geq\phi_+$ and $\phi\leq\phi_-$. The solitary wave profile $\phi(\xi; c)$, as part of the level curve $\Phi(\phi,\psi)=0$, is a subset of the connected component within $\phi\leq\phi_-$. More specifically, we readily see from the geometry of the level curve \eqref{firstintegral}, which is symmetric about $\psi$, that the solitary wave profile $\phi(\xi;c)$ is even with respect to $\xi$ and is strictly increasing on $\xi\in(-\infty,0]$ with its global maximum $\phi_{max}$ obtained at $\xi=0$; that is,
\[
\phi_{max}=\phi_-=c-\frac{2}{3}k-\sqrt{\frac{2}{9}k(3c+2k)}.
\]
Given that $c>2k>0$, it is also straightforward to see that $\phi_{max}\in(\frac{c-2k}{4}, c-2k)$, which concludes the proof of the theorem.

\end{proof}

\begin{re}
It is known that in the limit $ k \to 0, $  the solitary waves of the CH equation \eqref{CH} with maximal elevation at $ x = 0 $ converge
uniformly on every compact subset of $\mathbb {R} $  to the peakon $ \varphi (x) =  c e^{-|x|}  $  \cite{LiOl}. For the DP equation \eqref{DP2k}, a similar convergence result of the smooth solitary  waves to peakon can also be obtained by studying the following ODE:
\[
\frac{u_\xi(\xi)}{u(\xi)}=\frac{\sqrt{u^2-2(c-\frac{2}{3}k)u+c^2-2ck}}{c-u}.
\]

\end{re}

\bigskip

\section{Spectral Analysis}\label{s:3}

In this  section, we study the spectrum of the corresponding linear operator of the second-order variational derivative of the Lagrangian, which is critical to the stability of smooth solitary waves. From now on, we simply write $\phi$ for the solitary wave profile $\phi(\xi;c)$ unless specified.

Consider under the traveling frame $(t,\xi)$ the linearization of \eqref{travelingDP} along the soliton $\phi$,
\begin{equation}\label{linearization}
v_t=JL_cv,
\end{equation}
where $v\in L^2(\R)$, and
\[
L_c=\frac{\delta^2 Q_c}{\delta u^2}(\phi)
=-\phi-2k(4-\partial_\xi^2)^{-1}+c(1-\partial_\xi^2)(4-\partial_\xi^2)^{-1}
=c-\phi-(3c+2k)(4-\partial_\xi^2)^{-1}.
\]
It is straightforward to see that 
$L_c:  L^2(\R)\to L^2(\R)$ is a well-defined, self-adjoint, bounded linear operator. Moreover, we have the following spectral theorem about the operator $L_c$.
\begin{theorem}\label{spectrum}
The spectrum of the operator $L_c$, denoted as $\sigma(L_c)$, admits the following properties.
\begin{enumerate}
\item The spectrum set $\sigma(L_c)$ lies on the real line; that is, $\sigma(L_c)\subset \R$.
\item $0$ is a simple isolated eigenvalue of $L_c$ with $\partial_\xi \phi (\xi)$ as its eigenfunction.
\item On the negative axis $(-\infty, 0)$, the spectrum set $\sigma(L_c)$ admits nothing but only one simple eigenvalue, denoted as $\lambda_*$, with its corresponding normalized eigenfunction, denoted as $\phi_*$.
\item The set of essential spectrum $\sigma_{ess}(L_c)$ lies on the positive real axis, admitting a positive distance to the origin.
\end{enumerate} 
\end{theorem}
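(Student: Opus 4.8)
The plan is to analyze the four claims about $\sigma(L_c)$ separately, exploiting throughout the explicit decomposition $L_c = (c - \phi) - (3c + 2k)(4 - \partial_\xi^2)^{-1}$, where $c - \phi$ is a bounded multiplication operator (bounded away from zero since $0 \le \phi \le \phi_{max} < c - 2k < c$) and $(4 - \partial_\xi^2)^{-1}$ is a bounded, positive, self-adjoint operator on $L^2(\R)$. Claim (1) is immediate: $L_c$ is bounded and self-adjoint, so its spectrum is real. For claim (4), I would write $L_c = c - (3c+2k)(4-\partial_\xi^2)^{-1} - \phi$. Since $\phi \in H^\infty(\R)$ decays exponentially (from the phase-plane analysis in Theorem \ref{profile}, the level curve approaches the origin along the stable/unstable manifold), multiplication by $\phi$ is a relatively compact perturbation of the constant-coefficient operator $L_c^\infty \triangleq c - (3c+2k)(4 - \partial_\xi^2)^{-1}$. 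By Weyl's theorem, $\sigma_{ess}(L_c) = \sigma_{ess}(L_c^\infty) = \sigma(L_c^\infty)$, and the latter is computed by the Fourier transform: $\sigma(L_c^\infty) = \{\, c - \frac{3c+2k}{4+\xi^2} : \xi \in \R \,\} = [\,c - \tfrac{3c+2k}{4},\, c\,) = [\tfrac{c - 2k}{4},\, c)$, which is a positive interval with positive distance $\tfrac{c-2k}{4}$ to the origin.

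For claim (2), that $L_c \phi' = 0$ follows by differentiating the steady-state equation \eqref{sttDP} in $\xi$; one must check $\phi' \in L^2(\R)$, again from the exponential decay. Simplicity: since $0$ is an isolated point of $\sigma(L_c)$ lying below $\sigma_{ess}(L_c)$, it is an eigenvalue of finite multiplicity, and I would show the kernel is one-dimensional. One route is ODE-theoretic: the equation $L_c v = 0$, after applying $4 - \partial_\xi^2$, becomes a second-order linear ODE $(c - \phi)(v - v_{\xi\xi}) - (3c+2k)v + \text{(lower order)} = \text{(something)}$; more carefully, $L_c v = 0$ reads $(c-\phi) v = (3c+2k)(4 - \partial_\xi^2)^{-1} v$, so setting $w = (4-\partial_\xi^2)^{-1} v$ gives the system $v = (4 - \partial_\xi^2) w$ and $(c - \phi)(4 - \partial_\xi^2) w = (3c+2k) w$, i.e. a second-order ODE for $w$ with coefficients decaying to constants; the solution space is two-dimensional, and the requirement of $L^2$ decay at both $\pm\infty$ (the characteristic roots of the limiting equation are $\pm\sqrt{4 - (3c+2k)/c} = \pm\sqrt{(c-2k)/c}$, real and nonzero) kills one dimension, leaving the kernel one-dimensional, spanned by $\phi'$. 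That $0$ is below the essential spectrum already precludes it from being embedded, so no Jordan block issue arises for self-adjoint $L_c$.

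Claim (3) — exactly one simple negative eigenvalue — is the main obstacle, as the authors themselves flag ("the analysis on negative eigenvalues roots deep in dynamical systems and is technical"). The existence of at least one negative eigenvalue should follow from a test-function computation: one expects $\langle L_c \phi, \phi \rangle < 0$ or, better, that the quadratic form is negative on a suitable one-dimensional subspace (for KdV-type solitons, $\phi$ or $\partial_c \phi$ typically witnesses this via the constrained-minimization structure of the Lagrangian $Q_c$). Ruling out a \emph{second} negative eigenvalue is the delicate part: the standard tool is a Sturm-oscillation / Courant nodal-domain count for the associated ODE eigenvalue problem, but $L_c$ is not in Sturm-Liouville form, so I would instead convert $L_c v = \lambda v$ for $\lambda < 0$ into the ODE $(4-\partial_\xi^2)w_{\xi\xi} + \cdots$ obtained as above — after multiplying through by $4 - \partial_\xi^2$, the eigenvalue problem becomes a genuine second-order Sturm-Liouville problem $-(p(\xi; \lambda) w')' + q(\xi; \lambda) w = 0$ with $\lambda$-dependent but, for $\lambda < c - \tfrac{c-2k}{4}$, \emph{definite-signed} leading coefficient $p = c - \phi - \lambda > 0$ — so that classical Sturm theory applies on each such $\lambda$-slice. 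Since $\phi'$ (the $\lambda = 0$ eigenfunction, i.e. $w = (4-\partial_\xi^2)^{-1}\phi'$) has exactly one sign change, a monotonicity-in-$\lambda$ argument for the relevant Prüfer angle forces the count of eigenvalues in $(-\infty, 0)$ to be exactly one, and simple. Executing this rigorously — setting up the $\lambda$-dependent Sturm problem, verifying the sign of $p$ on the relevant range, and carrying out the oscillation count while controlling the behavior as $\lambda \to -\infty$ and $\lambda \to 0^-$ — is where the real work lies; it likely also draws on the explicit phase-plane description of $\phi$ from Section \ref{s:2} to pin down the monotonicity of coefficients.
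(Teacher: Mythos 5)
Your items (1) and (2) are fine and match the paper's reasoning, but there is a genuine error in your argument for item (4). You claim that multiplication by $\phi$ is a relatively compact perturbation of the zero-order constant-coefficient operator $L_c^\infty = c-(3c+2k)(4-\partial_\xi^2)^{-1}$ and invoke Weyl's theorem. This fails: for a \emph{bounded} operator $L_c^\infty$ the resolvent $(L_c^\infty-z)^{-1}$ is itself boundedly invertible, so compactness of $\phi\,(L_c^\infty-z)^{-1}$ would force the multiplication operator $\phi$ to be compact on $L^2(\R)$, which it is not (decay in $x$ alone, with no smoothing in frequency, never yields compactness; test with $\chi(x)e^{in x}$). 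In fact the conclusion you draw, $\sigma_{ess}(L_c)=\sigma(L_c^\infty)=[\tfrac{c-2k}{4},c)$, is false: Weyl sequences of the form $u_n(x)=\chi_{\epsilon_n}(x-x_0)e^{ik_nx}$ with $k_n\to\infty$ are annihilated in the limit by $(4-\partial_\xi^2)^{-1}$ and show that every value $c-\phi(x_0)$, hence the whole interval $[c-\phi_{max},c]$ (in particular the point $c$), lies in $\sigma_{ess}(L_c)$. The correct route — and the one the paper takes — is to factor $L_c-\lambda=-(c-\phi-\lambda)\,\bigl[\partial_\xi^2-A(\cdot,\lambda)\bigr]\,(4-\partial_\xi^2)^{-1}$ with $A(\xi,\lambda)=\frac{c-2k-4\phi-4\lambda}{c-\phi-\lambda}$: for $\lambda<\lambda_0\triangleq\min\{\tfrac{c-2k}{4},\tfrac{c-\phi_{max}}{2}\}$ the outer factors are isomorphisms, and the \emph{second-order differential} operator $\partial_\xi^2-A(\cdot,\lambda):H^2\to L^2$ is a genuinely relatively compact perturbation of $\partial_\xi^2-A(\infty,\lambda)$ (here decay of the coefficient can be traded against the resolvent's frequency decay), hence Fredholm of index $0$. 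This gives exactly what the theorem asserts — no essential spectrum in $(-\infty,\lambda_0)$ — without identifying $\sigma_{ess}(L_c)$ precisely.

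For item (3), the strategy you outline (pass to the $\lambda$-dependent second-order problem, run a Pr\"ufer-angle/oscillation argument that is monotone in $\lambda$, and use the nodal structure of the $\lambda=0$ eigenfunction) is indeed the paper's strategy, but you have only named it, not carried it out, and the parts you defer are precisely the substance of the proof: the monotonicity $\partial_\lambda\theta^u(0,\lambda)<0$, the invariance argument excluding eigenvalues for $\lambda<\lambda_0$, the boundary value $\theta^u(0,\lambda_0)>0$, and — crucially — the fact that the relevant $\lambda=0$ eigenfunction is $q_e=(4-\partial_\xi^2)^{-1}\partial_\xi\phi$, not $\partial_\xi\phi$ itself, so its having exactly one zero is not automatic from the evenness of $\phi$; the paper proves $q_e(\xi)<0$ for $\xi>0$ by an explicit estimate on the convolution representation. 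Also, your proposed test-function computation for the existence of a negative eigenvalue is unverified and unnecessary: in the paper existence and uniqueness of $\lambda_*$ come simultaneously from the intermediate value theorem applied to the strictly decreasing function $\lambda\mapsto\theta^u(0,\lambda)$ between the boundary values $\theta^u(0,\lambda_0)>0$ and $\theta^u(0,0)=-\pi/2$. (A minor slip: positivity of the leading coefficient $c-\phi-\lambda$ requires $\lambda<c-\phi_{max}$, not $\lambda<c-\tfrac{c-2k}{4}$; this is harmless for $\lambda\le 0$.)
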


\begin {proof}  Statement (1) is straightforward, due to the fact that $L_c$ is self-adjoint. We now prove statement (4) first.
Denoting $\lambda_0\triangleq \min\{\frac{c-2k}{4}, \frac{c-\phi_{max}}{2}\}>0$, we consider the eigenvalue problem
\[ L_cv=\lambda v, \quad \mbox{ for } \lambda\in(-\infty,\lambda_0), \mbox{ and }v\in L^2(\R).
\]
Introducing the notation
 \[
 p\triangleq(4-\partial_\xi^2)^{-1}v, \qquad A(\xi, \lambda)\triangleq\frac{c-2k-4\phi(\xi)-4\lambda}{c-\phi(\xi)-\lambda},
 \]
 the above eigenvalue problem is equivalent to
\begin{equation}\label{eigen}
L_\lambda p\triangleq p_{\xi\xi}-A(\xi,\lambda)p=0,\quad q\in H^2(\R).
\end{equation}
Note that $\phi (\xi)< c-2k<c$ for any $\xi\in(-\infty,\infty)$ and $\lambda<\lambda_0$, so the coefficient $A(\xi, \lambda)$ is well-defined in the sense that its numerator $c-\phi -\lambda$ is always positive. Moreover, for any given $\lambda\in(-\infty, \lambda_0)$, the constant $\frac{c-2k-4\lambda}{c-\lambda}>0$, yielding that the operator
\[
\begin{aligned}
L_\lambda^\infty: &H^2(\R) &\longrightarrow &L^2(\R),\\
&\quad p & \longmapsto & -\partial_\xi^2p+\frac{c-2k-4\lambda}{c-\lambda}p,
\end{aligned}
\]
is bounded invertible and thus Fredholm with index $0$. As a result, for any $\lambda\in(-\infty, \lambda_0)$, the operator $L_\lambda$ is Fredholm with index $0$ for that it differs by a compact perturbation from the operator $L_\lambda^\infty$, which concludes the proof of statement (4) and indicates that any $\lambda\in(-\infty, \lambda_0)$ is either in the resolvent of $L_c$ or an eigenvalue of $L_c$. As a matter of fact, from general properties of Sturm-Liouville operators \cite{K-P}, if $\lambda\in(-\infty, \lambda_0)$ is an eigenvalue of $L_\lambda$ (and thus of $L_c$), then it must be a simple eigenvalue. 

Statement (2) is basically a consequence of the above argument. Taking $\lambda=0$ and noting that $(4-\partial_\xi^2)^{-1}\partial_\xi \phi $ is a solution of the eigenvalue problem $L_0 p=0$ by differentiate the traveling wave equation with respect to $\xi$, we conclude that statement (2) is true.

We consider last the statement (3). The proof takes advantage of properties of the coefficient $A(\xi,\lambda)$, which is even in $\xi$ and strictly increasing on the interval $\xi\in [0,\infty)$. Moreover, since $\phi (\xi)$ decays exponentially as $\xi$ approaches $\pm\infty$, and $\lambda\leq0$, the coefficient of $q$,  $-A(\xi,\lambda)$, is always negative for non-positive values of $\lambda$ and large enough $\vert\xi\vert$.
It follows that there always exist unique Jost solutions $J^s(\xi,\lambda)$ and $J^u(\xi,\lambda)$, up to multiplication of a constant, which approaches $0$ as $\xi\to\pm\infty$ respectively. In fact, $J^{s\backslash u}(\xi,\lambda)$ is asymptotic  to the limit solution
\[
J_\infty^{s\backslash u}(\xi,\lambda)\triangleq e^{\mp\sqrt{A(\infty, \lambda)}\xi}, \qquad A(\infty,\lambda)\triangleq \lim\limits_{\xi\to\pm\infty}A(\xi,\lambda)=\frac{c-2k-4\lambda}{c-\lambda};
\]
that is,
\[
J^{s\backslash u}(\xi,\lambda)\simeq e^{\mp\sqrt{\frac{c-2k-4\lambda}{c-\lambda}}\xi},\quad \mbox{ as }\xi\to\pm\infty.
\]

{\it The eigenvalue problem \eqref{eigen} is really a shooting one. $\lambda$ is an eigenvalue of \eqref{eigen} if and only if the two vectors $$(J^s(0,\lambda),\frac{d}{d\xi}J^s(0,\lambda)) \mbox{ and } (J^u(0,\lambda),\frac{d}{d\xi}J^u(0,\lambda))$$ are parallel to each other.}

\smallskip

While it is possible to compute the corresponding Evans function to locate the eigenvalues of \eqref{eigen}, we prefer to give a more geometric proof which takes advantage of the special structure of solitary wave, specifically the evenness of the solitary wave profile $\phi (\xi)$.

Under the polar coordinates change
\begin{equation*}
 \left\{\begin{array}{rl}
q&=\rho  \cos\theta\\
q_\xi&=\rho  \sin\theta,\\
 \end{array}
 \right.
\end{equation*}
 the equation \eqref{eigen} becomes
\begin{equation}\label{angle}
\theta_\xi=A(\xi,\lambda)cos^2\theta-sin^2\theta.
\end{equation}
and an equation of $\rho$ which is slaved to the $\theta$ equation and thus omitted.
{\it An eigenvalue $\lambda_{ei}$ is such that there is a solution $\theta_{ei}(\xi,\lambda_{ei})$ of \eqref{angle} which approaches
\[
\theta^{-\infty}(\lambda)\triangleq \arctan(\lim_{\xi\to\pm\infty}\frac{\frac{d}{d\xi}J^s(\xi,\lambda)}{J^s(\xi,
\lambda)})=\arctan\sqrt{A(\infty,\lambda)},
\]
as $\xi\to-\infty$ and approaches
\[
\theta_k^{\infty}(\lambda)\triangleq\arctan(\lim_{\xi\to\infty}\frac{\frac{d}{d\xi}J^s(\xi,\lambda)}{J^s(\xi,
\lambda)})+k\pi=k\pi-\arctan\sqrt{A(\infty,\lambda)}, \qquad k\in \mathbb{Z},
\]
as $\xi\to\infty$,
 where, to prevent multiple counting of eigenfunctions, we intentionally set $\theta^{-\infty}$ fixed, in the sense that $\theta^{-\infty}$ does not depend on $k$ while $\theta^\infty_k$ does, so that the shooting problem has a fixed ``start point" at $-\infty$, that is, $\theta^{-\infty}$, and infinite many valid choices of ``end points" at $\xi=+\infty$, that is, $\{\theta^\infty_k\}_{k\in\mathbb{Z}}$.}

It is observed that the coefficient $A(\xi,\lambda)$, as a function of $\xi$ for fixed $\lambda$, admits the following dichotomy. 
\begin{itemize}
\item  For any fixed $\lambda\in(-\infty, \lambda_0)$ with $\lambda_1\triangleq \frac{c-2k}{4}-\phi_{max}$,
\[
A(\xi, \lambda)>0,\quad \mbox{ for all }\xi;
\]
\item  For any fixed $\lambda\in[\lambda_0, 0]$, there exists $\bar\xi(\lambda)\geq0$ such that
\[
A(\xi, \lambda)\begin{cases} \geq 0, \quad \vert\xi\vert\geq\bar\xi(\lambda), \\ \leq0,\quad \vert\xi\vert\leq\bar\xi(\lambda).\end{cases}
\]
\end{itemize}

\bigskip

There can not be any solution of the angle equation \eqref{angle} approaching $\theta^{-\infty}(\lambda)$, $\theta_k^{\infty}(\lambda)$ as $\xi\to\pm\infty$. Indeed, the angle equation \eqref{angle} has four $(\xi,\lambda)$-dependent pseudo-equilibrium on the unite circle:
\[\Theta_1^\pm(\xi,\lambda)\triangleq\pm\arctan\sqrt{A(\xi,\lambda)},
\quad \Theta_2^\pm(\xi,\lambda)\triangleq\pm\arctan\sqrt{A(\xi,\lambda)}-\pi. \]
$\Theta_1^+$ and $\Theta_2^+$ attract nearby points, while $\Theta_1^-$ and $\Theta_2^-$ repel nearby points. For conveniences, we also introduce the notation $B_k\triangleq-\frac{k\pi}{2}$ for $k\in \mathbb{N}$.  \\

{\bf Case 1.} Any $\lambda\in(-\infty, \lambda_0)$ is not an eigenvalue since the first quadrant, $\theta\in[0,\pi/2]$, is forward-invariant, which indicates that if $\theta(\xi,\lambda)$ starts at $\xi=-\infty$ from $\theta^{-\infty}(\lambda)=\Theta_1^+(-\infty,\lambda)\in[0,\pi/2]$, it will not leave $[0,\pi/2]$ and thus never reaches any $\theta_k^\infty$, which lies in the second and fourth quadrant. More specifically, as $\xi$ increases from $-\infty\to0$, $\Theta_1^+$ and $\Theta_1^-$ move  respectively clockwise and counterclockwise towards, but never reach, $B_0$, and as $\xi$ then increases from $0\to\infty$, the whole process is reversed; that is, $\Theta_1^+$ and $\Theta_1^-$ return to their starting positions at $-\infty$ in exactly the same speed but opposite directions. Essentially, the movement of $\Theta_1^+$ and $\Theta_1^-$ is a reflection about the line $\{\theta=0\}$ of each other because of the evenness of $\phi (\xi)$.
The movement of $\Theta_2^+$ and $\Theta_2^-$ are exactly the same as that of $\Theta_1^+$ and $\Theta_1^-$ in obvious sense.

As a matter of fact, we can show that, for $\xi\in[0,\infty)$, the angle evolution, $\theta^s(\xi,\lambda)$, of the stable Just solution $J^s(\xi,\lambda)$, is strictly decreasing, and shadows the unstable pseudo-equilibria $\Theta_{1\backslash 2}^-$, modulo $2\pi$. More specifically, $\theta^s(\xi,\lambda)$ can be viewed as the solution to the angle equation \eqref{angle} with the limit boundary condition $\theta^s(+\infty,\lambda)=\Theta_{1\backslash 2}^-(+\infty,\lambda)\mod(2\pi)$.  For conveniences, we simple set $\theta^s(+\infty,\lambda)=\Theta_{2}^-(+\infty,\lambda)$. Noting that the intervals
\[
(\Theta_2^-(+\infty, \lambda), \theta_1^-(0,\lambda)-2\pi) \text{ and } (\Theta_2^-(0,\lambda), \Theta_1^-(+\infty, \lambda)),
\]
are forward invariant, we conclude that the angle of the stable Jost function, $\theta^s(\xi,\lambda)$, is trapped within the interval $(\Theta_2^-(+\infty, \lambda), \Theta_2^-(0,\lambda))$; that is,
\[
\{\theta^s(\xi,\lambda)\mid \xi\in[0,\infty)\}\subset(\Theta_2^-(+\infty, \lambda), \Theta_2^-(0,\lambda)).
\]
Moreover, we claim that
\[
\theta^s(\xi,\lambda)<\Theta_2^-(\xi,\lambda), \text{ for any  }\xi\in[0,\infty),
\]
essentially due to the fact that the unstable pseudo-equilibrium $\Theta_2^-(\xi,\lambda)$ is strictly decreasing with respect to $\xi\in[0,\infty)$. We prove by contradiction. If this is not true, we have the following two scenarios.
\begin{enumerate}
\item If there exists $\xi_0\in[0,\infty)$ such that $\theta^s(\xi_0,\lambda)>\Theta_2^-(\xi_0,\lambda)$, then it is straightforward to see that the interval $(\Theta_2^-(\xi_0,\lambda), -\pi)$ is forward invariant for $\xi\in[\xi_0, +\infty)$. As a result, we have
\[
\Theta_2^-(+\infty, \lambda)=\lim\limits_{\xi\to\infty}\theta^s(\xi,\lambda)\in[\Theta_2^-(\xi_0,\lambda), -\pi]\not\ni \Theta_2^-(+\infty, \lambda),
\]
which is a contradiction.
\item If there exists $\xi_0\in[0,\infty)$ such that $\theta^s(\xi_0,\lambda)=\Theta_2^-(\xi_0,\lambda)$, then we claim that the interval $[\Theta_2^-(\xi_0,\lambda), -\pi)$ is forward invariant for $\xi\in[\xi_0, +\infty)$ and thus a contradiction follows as in the previous case. The subtle part of the forward invariance lies at the inclusion of the left end point $\Theta_2^-(\xi_0,\lambda)$, due to the fact that, if $\theta^s(\xi_0,\lambda)=\Theta_2^-(\xi_0,\lambda)$, then
\[
\partial_\xi \theta^s(\xi_0,\lambda)=0>\partial_\xi \Theta_2^-(\xi_0,\lambda),
\]
letting $\theta^s$ fall behind $\Theta_2^-$ and converges towards $\Theta_2^+$ as $\xi$ goes to infinity from $\xi_0$.
\end{enumerate}
According to the evenness of $A$ with respect to $\xi$, similar conclusions can be drawn for the angle evolution, $\theta^u(\xi,\lambda)$, of the unstable Jost solution $J^u(\xi,\lambda)$.
\smallskip

{\bf Case 2. } In the interval $[\lambda_0, 0]$, we claim that there are only two eigenvalues; that is, $\lambda=0$ and $\lambda=\lambda_*\in(\lambda_0, 0)$. It is straightforward to see that
\begin{center}
$\lambda$ is an eigenvalue if and only if $\theta^s(0,\lambda)=\theta^u(0,\lambda)$,
\end{center}
which, thanks to the evenness of $A(\xi,\lambda)$ with respect to $\xi$, is equivalent to the following statement.
\[
\lambda \text{ is an eigenvalue if and only if } \theta^u(0,\lambda)=B_k, \text{ for some }k\in\mathbb{N}.
\]
Noting that for any $\lambda\in(-\infty,0]$,
\[
\partial_\lambda A(\xi,\lambda)=-\frac{3c+2k}{(c-\phi(\xi)-\lambda)^2}<0, \quad \partial_\lambda \theta^{-\infty}(\lambda)=\frac{\partial_\lambda A(\infty,\lambda)}{2\sqrt{A(\infty,\lambda)}(1+A(\infty, \lambda))}<0,
\]
we conclude that
\[
\begin{aligned}
\partial_\lambda \theta^u(0,\lambda)&=\partial_\lambda \big( \theta^u(0,\lambda) - \theta^{-\infty}(\lambda)\big) + \partial_\lambda \theta^{-\infty}(\lambda)\\
&=\partial_\lambda \big( \int_{-\infty}^0 (A(\xi,\lambda)\cos^2\theta-\sin^2\theta)d\xi\big) + \partial_\lambda \theta^{-\infty}(\lambda)\\
&=\int_{-\infty}^0(\partial_\lambda A(\xi,\lambda))\cos^2\theta d\xi + \partial_\lambda \theta^{-\infty}(\lambda)\\
&<0.
\end{aligned}
\]
In other words, $\theta^u(0,\lambda)$ is a strictly decreasing function with respect to $\lambda\in(-\infty, 0]$. In addition, we claim that
\begin{equation}\label{e:bnd}
\theta^u(0,\lambda_0)>0, \quad \theta^u(0,0)=-\pi/2.
\end{equation}
As a result of the monotonicity and boundary conditions, there exists a unique $\lambda_*\in(\lambda_0, 0)$ such that $\theta^u(0,\lambda)=B_0=0$; that is, $\lambda_0$ is the only eigenvalue in the interval $(-\lambda_0,0)$.  We are now left to prove that \eqref{e:bnd} holds.
\begin{itemize}
\item $\theta^u(0,\lambda_0)>0$. As a matter of fact, for the interval $(-\infty, -\bar{\xi}(\lambda)]$, the analysis about the angle evolution of the unstable Jost solutions in Case 1 holds. More specifically, as $\xi$ goes from $-\infty$ to $\bar{\xi}(\lambda)$, $\theta^u(\xi,\lambda)$ decreases from $\theta^{-\infty}(\lambda)$ to $\bar{\theta}(\lambda):=\theta^u(\bar{\xi}(\lambda), \lambda)>0=\Theta_1^+(\bar{\xi}(\lambda),\lambda)$. For $\lambda=\lambda_0$, we have $\bar{\xi}(\lambda)$ and thus $\theta^u(0,\lambda_0)=\bar{\theta}(\lambda_0)>0$.
\begin{re}
The angle evolution of the unstable Jost solution, $\theta^u(\xi,\lambda)$, is a strictly decreasing function for $\xi\in(-\infty, 0]$.
For the interval $(-\infty, -\bar{\xi}(\lambda)]$, it is just shown. For the interval $(-\bar{\xi}(\lambda), 0]$, it is straightforward to see that $\theta_\xi(\xi, \lambda)=A(\xi,\lambda)\cos^2\theta-\sin^2\theta<0$,
which concludes our proof.
\end{re}
\item $\theta^u(0,0)=-\pi/2$. Note that $\lambda=0$ is proved to be an eigenvalue of \eqref{eigen} with eigenfunction
\[
q_e=(4-\partial_\xi^2)^{-1}\partial_\xi \phi (\xi),
\]
which satisfies the ODE
\begin{equation*}
\partial_\xi^2q_e-4q_e=-\partial_\xi \phi (\xi),\quad q_e\in H^2(\R).
\end{equation*}
To show that $\theta^u(0,0)=-\pi/2$, we only need to prove that $q_e(\xi)$ has exactly one $0$ on $(-\infty,\infty)$. Note that $q_e(\xi)$ is an odd function since $\phi (\xi)$ is even. Therefore it suffices to show
\begin{equation}\label{target}
q_e(\xi)<0,\quad \mbox{ for all }\xi>0.
\end{equation}
In fact, as the only solution which decays on both $\pm\infty$,
\begin{equation*}
q_e(\xi)=\frac{1}{4}\left (\int_\xi^{+\infty}e^{2(\xi-s)}\partial_\xi \phi (s)ds+\int_{-\infty}^\xi e^{-2(\xi-s)}\partial_\xi \phi (s)ds \right ).
\end{equation*}
Integrating by parts and change variable yields that, for any $\xi>0$,
\begin{eqnarray*}
q_e(\xi)
&=&\frac{1}{2} \left ( \int_\xi^{+\infty}e^{2(\xi-s)}\phi (s)ds-\int_{-\infty}^\xi e^{-2(\xi-s)} \phi (s)ds \right ) \\
&=&\frac{1}{2} \left ( \int_\xi^{+\infty}(e^{2\xi}-e^{-2\xi})e^{-2s}\phi (s)ds-\int_{-\xi}^\xi e^{-2(\xi-s)}\phi (s)ds \right )\\
&<&\frac{1}{2} \left ( \int_\xi^{+\infty}(e^{2\xi}-e^{-2\xi})e^{-2s}\phi (\xi)ds-\int_{-\xi}^\xi e^{-2(\xi-s)}\phi (\xi)ds \right ) \\
&=&\frac{1}{2}\phi (\xi) \left ( \int_\xi^{+\infty}(e^{2\xi}-e^{-2\xi})e^{-2s}ds-\int_{-\xi}^\xi e^{-2(\xi-s)}ds \right ) \\
&=&0,
\end{eqnarray*}
\end{itemize}
which concludes the desired  statement (3) and hence completes the proof of Theorem  \ref {spectrum}.
\end{proof}

\section{Spectral Stability of the DP Smooth Solitons}\label{s:4}

In this section, we give a proof of Theorem \ref{stability},  which is mainly based on  the frame work of Lin and Zeng \cite{LZ}.

Let $X_-\triangleq \spn\{\phi_*\}$ be the eigenspace of the operator $L_c$ with respect to the unique negative eigenvalue $\lambda_*$ and
\begin{equation*}
X_+\triangleq (X_-\oplus Ker\, L_c)^{\perp_{(L_c\cdot,\cdot)_{L^2}}}.
\end{equation*}
We then conclude that the Morse index of $L_c$ is 1, denoted as $n^-(L_c)\triangleq dim\, X_-=1$, and have the following decomposition
\[
L^2(\R)=X_-\oplus ker\,  L_c\oplus X_+,\quad
\]
where all subspaces are invariant under $L_c$, satisfying
\begin{itemize}
\item $(L_cv,v)<0$ for all $v\in X_-\setminus \{0\};$

\item there exists $\delta>0$ such that
\[
(L_cv,v)\geq\delta\Vert v\Vert_{L^2(\R)},\quad \mbox{ for any } v\in X_+.
\]
\end{itemize}
Given all the above conditions, according to the work by Lin and Zeng \cite{LZ}, we have the following index inequality,
\begin{equation}\label{index}
k_0^{\leq0}\leq n^-(L_c),
\end{equation}
where $k_0^{\leq0}$ is the number of nonpositive dimensions of $( L_c\cdot, \cdot)$ restricted to the generalized kernel of $JL_c$ modulo $\ker (L_c)$. Please note that the above inequality is a direct consequence of the general index formula Eq (1.2) in \cite{LZ}; see Section 2.4 in \cite{LZ} for details. Moreover, we have the following lemma.
\begin{lemma}[Corollary 2.2, \cite{LZ}]\label{counting}
If $k_0^{\leq0}= n^-(L)$, then the corresponding linearized flow is spectrally stable; that is, there exists no exponentially unstable solution.
\end{lemma}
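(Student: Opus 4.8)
The plan is to read Lemma~\ref{counting} off the same general Hamiltonian index count of Lin and Zeng from which the inequality \eqref{index} was extracted, i.e.\ off Eq.~(1.2) of \cite{LZ}. Written out for the linearized DP flow $v_t=JL_cv$ under the standing hypotheses (the invariant splitting $L^2(\R)=X_-\oplus\ker L_c\oplus X_+$ with strict negativity of $(L_c\cdot,\cdot)$ on $X_-$ and uniform positivity on $X_+$, the operator $J$ skew-adjoint, and $n^-(L_c)<\infty$), that formula is the identity
\begin{equation*}
k_r+2k_c+2k_i^{\le 0}=n^-(L_c)-k_0^{\le 0},
\end{equation*}
where $k_r$ is the number (with algebraic multiplicity) of eigenvalues of $JL_c$ on the positive real axis, $k_c$ the number of quadruplets of eigenvalues of $JL_c$ with strictly positive real part and nonzero imaginary part, $k_i^{\le 0}$ the number of non-positive directions of $(L_c\cdot,\cdot)$ on the sum of the generalized eigenspaces of $JL_c$ attached to nonzero purely imaginary eigenvalues, and $k_0^{\le 0}$ the quantity defined above. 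Discarding the non-negative left-hand side gives back \eqref{index}; retaining it is what yields the lemma.

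First I would recall the exponential-trichotomy part of \cite{LZ}: under the hypotheses above, $JL_c$ generates a $C_0$-group admitting an exponential trichotomy whose unstable subspace is finite-dimensional and spanned by generalized eigenvectors of $JL_c$ with eigenvalues of strictly positive real part. Consequently an exponentially unstable solution exists if and only if $k_r+k_c\ge 1$, so ``spectral stability'' is exactly the statement $k_r=k_c=0$. Then, assuming the hypothesis $k_0^{\le 0}=n^-(L_c)$, the right-hand side of the index identity vanishes; since $k_r,k_c,k_i^{\le 0}$ are non-negative integers, all three must be zero, and in particular $k_r=k_c=0$, which by the previous sentence is precisely spectral stability. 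This is the full content of the lemma.

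In our setting Theorem~\ref{spectrum} already supplies $n^-(L_c)=1$ together with the spectral-gap decomposition $L^2(\R)=X_-\oplus\ker L_c\oplus X_+$, so the route toward Theorem~\ref{stability} then reduces to verifying $k_0^{\le 0}=1$: the generalized kernel of $JL_c$ modulo $\ker L_c$ is one-dimensional, spanned by $\partial_c\phi$ (obtained by differentiating the traveling-wave relation \eqref{sttDP} in $c$, which gives $L_c\partial_c\phi=-\tfrac{\delta S}{\delta u}(\phi)$), and one checks that $(L_c\partial_c\phi,\partial_c\phi)=-\tfrac{d}{dc}S(\phi)\le 0$ under the convexity of $c\mapsto Q_c(\phi)$; this is where the third obstacle listed in the introduction is confronted. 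That part, however, is outside the scope of Lemma~\ref{counting} itself.

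The only genuinely non-quotational point in proving Lemma~\ref{counting} is confirming that the DP operators fit the abstract framework of \cite{LZ}: that $J=\partial_\xi(4-\partial_\xi^2)(1-\partial_\xi^2)^{-1}$ is closed, densely defined and skew-adjoint on $L^2(\R)$; that $L_c=c-\phi-(3c+2k)(4-\partial_\xi^2)^{-1}$ is bounded self-adjoint with exactly the spectral picture of Theorem~\ref{spectrum}, delivering $n^-(L_c)=1$ and the coercive splitting; and that the generalized kernel of $JL_c$ modulo $\ker L_c$ is finite-dimensional, so that $k_0^{\le 0}$ is a well-defined finite integer. I expect this structural verification to be the main (though largely routine) obstacle; once it is in hand, Lemma~\ref{counting} follows immediately from the index identity above.
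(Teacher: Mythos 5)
Lemma~\ref{counting} is not proved in the paper at all: it is quoted verbatim as Corollary~2.2 of \cite{LZ}, and your argument simply unfolds that citation, deriving the corollary from the Lin--Zeng index identity $k_r+2k_c+2k_i^{\le 0}+k_0^{\le 0}=n^-(L_c)$ together with their exponential trichotomy, which is exactly the intended content. Your proposal is correct and takes essentially the same route as the paper, with the extra (and reasonable) remark that one must check the DP operators $J$ and $L_c$ fit the abstract framework of \cite{LZ}.
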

%

To obtain the spectral stability result in Theorem  \ref{stability}, it suffices to  prove the following index equality.
\begin{lemma}\label{convex}
It holds that $k_0^{\leq0}=n^-(L_c)=1$ for any $c>2k>0$.
\end{lemma}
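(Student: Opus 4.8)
The plan is to reduce the identity $k_0^{\le 0}=n^-(L_c)=1$ to the strict convexity of $c\mapsto Q_c(\phi(\cdot;c))$, and then to verify that convexity by means of the explicit phase-plane description of the profile from Section~\ref{s:2}. Since $n^-(L_c)=1$ is already contained in Theorem~\ref{spectrum}, only $k_0^{\le 0}=1$ needs to be shown.

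The first step is to identify the generalized kernel of $JL_c$ modulo $\ker L_c$. The operator $J=\partial_\xi(4-\partial_\xi^2)(1-\partial_\xi^2)^{-1}$ is injective on $L^2(\R)$, so $\ker(JL_c)=\ker L_c=\spn\{\partial_\xi\phi\}$, one-dimensional by Theorem~\ref{spectrum}. Differentiating the profile equation \eqref{sttDP} in $c$ gives $L_c\,\partial_c\phi=-\tfrac{\delta S}{\delta u}(\phi)$; applying $J$ and using $J\tfrac{\delta S}{\delta u}(\phi)=\partial_\xi\phi$ yields $JL_c\,\partial_c\phi=-\partial_\xi\phi\in\ker(JL_c)$, so $\partial_c\phi$ belongs to the generalized kernel. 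A further chain element $w$ with $JL_cw=\partial_c\phi$ would force the antiderivative of $\partial_c\phi$ to lie in $L^2(\R)$, i.e. $\tfrac{d}{dc}\!\int_\R\phi\,d\xi=0$, which the explicit profile rules out; hence the Jordan chain at $0$ has length two and the quotient of the generalized kernel by $\ker L_c$ is spanned by the class of $\partial_c\phi$. On that one-dimensional space the quadratic form $(L_c\cdot,\cdot)_{L^2}$ reduces to the scalar
\[
(L_c\partial_c\phi,\partial_c\phi)_{L^2}=-\big(\tfrac{\delta S}{\delta u}(\phi),\partial_c\phi\big)_{L^2}=-\tfrac{d}{dc}S(\phi(\cdot;c))=-d''(c),\qquad d(c)\triangleq Q_c(\phi(\cdot;c)),
\]
where I used $d'(c)=S(\phi(\cdot;c))$, a consequence of $\tfrac{\delta Q_c}{\delta u}(\phi)=0$. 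By the index count of Lin and Zeng \cite{LZ}, this gives $k_0^{\le 0}=1$ when $d''(c)>0$ (and $k_0^{\le 0}=0$ when $d''(c)<0$); in view of \eqref{index} and Lemma~\ref{counting}, it therefore suffices to prove $d''(c)>0$ for every $c>2k$.

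To establish $d''(c)>0$ I would first simplify $d(c)$. The profile relation $(3c+2k)(4-\partial_\xi^2)^{-1}\phi=c\phi-\tfrac12\phi^2$, which is \eqref{traveling} rewritten, lets one express every nonlocal contribution to $H(\phi)$ and $S(\phi)$ through $\int_\R\phi^2\,d\xi$ and $\int_\R\phi^3\,d\xi$, and after cancellation the Lagrangian collapses to
\[
d(c)=Q_c(\phi)=\tfrac1{12}\int_\R\phi^3\,d\xi .
\]
It therefore suffices to prove that $c\mapsto\int_\R\phi(\xi;c)^3\,d\xi$ is strictly convex. For this I would pass to $\phi$ as the independent variable: the first integral \eqref{firstintegral} reads $\tfrac12(c-\phi)^2\phi_\xi^2=\phi^2P(\phi)$ with $P(\phi)=\tfrac12(\phi_+-\phi)(\phi_--\phi)$ and $\phi$ ranging over $[0,\phi_{max}]$, $\phi_{max}=\phi_-$, so that $d\xi=d\phi/|\phi_\xi|$ together with the rescaling $\phi=\phi_-\,t$ converts $\int_\R\phi^3\,d\xi$ (and, via $\int_\R\phi^2\,d\xi=\tfrac{c-2k}{2}\int_\R\phi\,d\xi$, also the lower moments) into elementary integrals over $t\in[0,1]$ whose closed forms depend on $c$ only through $\phi_\pm(c)$. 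Strict convexity in $c$ is then a direct, if delicate, estimate, for which the monotonicity $\phi_{max}'(c)>0$ of Theorem~\ref{profile} is useful. (Alternatively, if the integrable singularity of this quadrature at $\phi=\phi_{max}$ is handled with care, one may instead try to prove $\partial_c\phi(\xi)>0$ for every $\xi$ directly from the second-order ODE satisfied by $\phi$; since $\tfrac{\delta S}{\delta u}(\phi)=\tfrac{\phi(4k+3\phi)}{2(3c+2k)}\ge0$, this would give $d''(c)=\int_\R\tfrac{\phi(4k+3\phi)}{2(3c+2k)}\,\partial_c\phi\,d\xi>0$ immediately.)

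I expect the main obstacle to be exactly this last step --- reducing $\int_\R\phi^3\,d\xi$ to a form explicit enough that its convexity in the wave speed $c$ can be read off, equivalently understanding how the momentum and the energy of the solitary wave vary with $c$. This is the point where the particular algebraic structure of the Degasperis-Procesi profile equation is indispensable; everything else is soft functional analysis together with the phase-plane analysis already carried out in Section~\ref{s:2}.
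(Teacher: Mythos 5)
Your reduction is sound and is in fact the same reduction the paper uses: differentiating \eqref{sttDP} in $c$ gives $L_c\partial_c\phi=-\frac{\delta S}{\delta u}(\phi)$, hence $JL_c\partial_c\phi=-\partial_\xi\phi$ and $\partial_c\phi\in\mathrm{gKer}(JL_c)\setminus\ker(L_c)$, and $(L_c\partial_c\phi,\partial_c\phi)=-\frac{d}{dc}S(\phi(\cdot;c))=-d''(c)$, so that $d''(c)>0$ together with \eqref{index} and Lemma \ref{counting} yields $k_0^{\leq 0}=n^-(L_c)=1$. (Your extra claim that the Jordan chain has length exactly two is unnecessary for this conclusion --- the cap $k_0^{\leq 0}\leq 1$ already comes from \eqref{index} --- and as stated it is unsubstantiated, since you do not actually verify $\frac{d}{dc}\int_\R\phi\,d\xi\neq 0$.) The identity $Q_c(\phi)=\frac{1}{12}\int_\R\phi^3\,d\xi$ is correct and is a nice observation, but it does not by itself advance the argument.

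The genuine gap is that the positivity $d''(c)>0$, which is the entire substance of the lemma, is never proved: you only announce that the quadrature in the variable $\phi$ leads to ``elementary integrals'' whose convexity in $c$ is ``a direct, if delicate, estimate.'' This is precisely where the paper has to work. After reducing, via the profile relation, to $S(\phi)=\frac{1}{2(3c+2k)}\int_{-\infty}^{0}(3\phi+4k)\phi^2\,d\xi$ and passing to $\phi$ as integration variable using \eqref{firstintegral}, one cannot differentiate in $c$ under the integral because of the $c$-dependent square-root singularity at $\phi=\phi_-$; the paper removes it by the further substitution $z=\sqrt{(\phi-\phi_+)(\phi-\phi_-)}$, obtains a closed-form expression for $S(\phi)$ in terms of $c,k$, and only then differentiates, finding $\frac{d}{dc}S(\phi)=\frac{3c^2(c+k)}{(3c+2k)^2}\sqrt{\frac{c-2k}{c}}>0$. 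Your proposal stops exactly where this computation begins (and working with $\int_\R\phi^3\,d\xi$ would even require two $c$-derivatives of such a singular integral rather than one). Moreover, your suggested shortcut --- proving $\partial_c\phi(\xi)>0$ for all $\xi$ --- is almost certainly false: with the even normalization, the tails decay like $e^{-\kappa(c)|\xi|}$ with $\kappa(c)=\sqrt{(c-2k)/c}$ strictly increasing in $c$, so $\partial_c\phi$ must become negative for large $|\xi|$; hence that route cannot replace the missing quadrature argument, and the positivity of $d''(c)$ remains unestablished in your proposal.
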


\begin{proof} Recall that
\[
\frac{\delta H}{\delta u}(\phi)+c\frac{\delta S}{\delta u}(\phi)=0,
\]
which, taken derivative with respect to $c$ to both sides, yields,
\[
L_c\partial_c\phi=-\frac{\delta S}{\delta u}(\phi)=-(1-\partial_\xi^2)(4-\partial_\xi^2)^{-1}\phi.
\]
Therefore, we denote the general kernel of $JL_c$ as $\mathrm{gKer}(JL_c)$, recall that $J=\partial_x(4-\partial_\xi^2)(1-\partial_\xi^2)^{-1}$ and have
\[
JL_c\partial_c\phi=-\partial_x\phi\in\ker(L_c)\subseteq\ker(JL_c),
\]
implying that $\partial_c\phi\in \mathrm{gKer}(JL_c)\backslash \ker(L_c)$. In addition, we have
\[
(L_c\partial_c\phi,\partial_c\phi)=(-\frac{\delta S}{\delta u}(\phi),\partial_c\phi)
=-\frac{d}{dc}S(\phi).
\]
As a result, it suffices to show
\begin{equation}\label{e:Sd}
\frac{d}{dc}S(\phi)>0
\end{equation}
to conclude $1\leq k_0^{\leq0}\leq n^-(L_c)=1$ and thus $k_0^{\leq0}= n^-(L_c)=1$.

To prove \eqref{e:Sd}, we first derive a more explicit expression of $S(\phi)$.
Denoting
\[
w =(4-\partial_\xi^2)^{-1}\phi,
\]
we have
\begin{equation}\label{P-mix}
S(\phi)=\frac{1}{2}\int_{-\infty}^\infty \phi \cdot(1-\partial_\xi^2)(4-\partial_\xi^2)^{-1}\phi  d\xi=\frac{1}{2}\int_{-\infty}^\infty \phi \cdot(\phi-3w)d\xi.
\end{equation}
The profile $w$ can be expressed in terms of $\phi$. More specifically,
the traveling wave equation  \eqref{traveling}:
\[
c(1-\partial_\xi^2)(4-\partial_\xi^2)^{-1}\phi-[\frac{1}{2}(\phi)^2+(4-\partial_\xi^2)^{-1}2k\phi]=0
\]
yields
\[
c(\phi-3w)=\frac{1}{2}\phi^2 +2kw,
\]
which, after simple rearrangements, yields
\[
w=\frac{2c\phi -\phi^2 }{6c+4k}\quad \mbox{ and }\quad
\phi-3w=\frac{3\phi+4k}{2(3c+2k)}\phi.
\]
Taking advantage of \eqref{P-mix} and the evenness of $\phi$, we then have
\begin{equation}\label{P-phi}
S(\phi)
=\frac{1}{2(3c+2k)}\int_{-\infty}^0(3\phi +4k)\phi^2 d\xi.
\end{equation}

In order to derive a more explicit expression of the above integral with respect to $c$, we take advantage of \eqref{firstintegral}, which reads
\[
\frac{1}{2}(c-\phi )^2\phi _\xi^2=\phi^2 (\frac{1}{2}\phi^2 -c\phi +\frac{2}{3}k\phi +\frac{1}{2}c^2-kc)=\phi^2 P(\phi)=\frac{1}{2}\phi^2(\phi-\phi_+)(\phi-\phi_-),
\]
where we recall that $\phi_\pm=c-\frac{2}{3}k\pm\sqrt{\frac{2}{9}k(3c+2k)}$ are the two positive roots of the quadratic polynomial $P(\phi)$.
As a result, for $\xi\in(-\infty,0)$, $\phi _\xi>0$, and
\[
\phi =\frac{c-\phi }{\sqrt{(\phi-\phi_+)(\phi-\phi_-)}}\phi _\xi, \quad \mbox{ for }\xi\in(-\infty,0).
\]
Plugging this expression of $\phi$ on $\xi\in(-\infty,0]$ into \eqref{P-phi}, we have
\begin{equation}\label{P-phi-s}
\begin{aligned}
S(\phi )
&=\frac{1}{2(3c+2k)}\int_{-\infty}^0(3\phi +4k)\phi^2 d\xi\\
&=\frac{1}{2(3c+2k)}\int_{-\infty}^0(3\phi +4k)\phi \cdot \frac{c-\phi }{\sqrt{(\phi-\phi_+)(\phi-\phi_-)}}\phi _\xi d\xi\\
&=\frac{1}{2(3c+2k)}\int_0^{\phi_-}\frac{(3\phi +4k)\phi (c-\phi )}{\sqrt{(\phi-\phi_+)(\phi-\phi_-)}} d\phi.
\end{aligned}
\end{equation}
It is noted that one can not take derivative with respect to $c$ directly because of the singularity in the denominator. Instead, introducing the change of variable
\[
z\triangleq\sqrt{(\phi-\phi_+)(\phi-\phi_-)}, \quad \al_\pm\triangleq\frac{\phi_+\pm\phi_-}{2}, \quad \beta\triangleq\phi_+\phi_-,
\]
and noting that
\[
d\phi =-\frac{zdz}{\al_+-\phi},\quad  \al_+-\phi=\sqrt{z^2+\al_-^2}, \quad \al_+=c-\frac{2}{3}k, \quad \al_-=\sqrt{\frac{2}{3}kc+\frac{4}{9}k^2}, \quad \be=c^2-2kc,
\]
the expression of $S$ in \eqref{P-phi-s} becomes
\[
\begin{aligned}
S(\phi )
=&\frac{1}{2(3c+2k)}\int_0^{\phi_-}\frac{(3\phi +4k)\phi (c-\phi )}{\sqrt{(\phi-\phi_+)(\phi-\phi_-)}} d\phi\\
=&\frac{3}{2(3c+2k)}\int_0^{\sqrt{\be}}\frac{\big[(\al_+-\phi) -(\al_++\frac{4}{3}k)\big]\big[(\al_+-\phi)-\al_+\big]\big[(\al_+-\phi )-(\al_+-c)\big]}{\al_+-\phi } dz\\
=&\frac{3}{2(3c+2k)}\int_0^{\sqrt{\be}}\left\{(\al_+-\phi)^2-(3\al_++\frac{4}{3}k-c)(\al_+-\phi) \right. \\
&\left.+\left[3\al_+^2+2(\frac{4}{3}k-c)\al_+-\frac{4}{3}kc\right]-\frac{\al_+(\al_++\frac{4}{3}k)(\al_+-c)}{\al_+-\phi}\right\}dz\\
=&\frac{3}{2(3c+2k)}\int_0^{\sqrt{\be}}\left\{ (z^2+\al_-^2)-2(c-\frac{1}{3}k)\sqrt{z^2+\al_-^2}+\big(c^2-\frac{4}{3}kc-\frac{4}{9}k^2 \big)+\frac{\frac{2}{3}k(c^2-\frac{4}{9}k^2)}{\sqrt{z^2+\al_-^2}}\right\}dz\\
=&\frac{1}{2(3c+2k)}\left\{z^3+(3c^2-2kc)z+(k-3c)\left[z\sqrt{z^2+\al_-^2}+
\al_-^2\log(z+\sqrt{z^2+\al_-^2})\right] \right.\\
&\left.
+2k(c^2-\frac{4}{9}k^2)\log(z+\sqrt{z^2+\al_-^2})\right\}\bigg|_{z=0}^{z=\sqrt{\be}}\\
=&\frac{(c^2-ck-\frac{2}{3}k^2)\sqrt{c^2-2ck}}{2(3c+2k)}
-\frac{1}{9}k^2\log\frac{c-\frac{2}{3}k+\sqrt{c^2-2ck}}{\sqrt{\frac{2}{3}kc+\frac{4}{9}k^2}}.
\end{aligned}
\]
A straightforward but lengthy calculation shows that the derivative
\[
\frac{d}{dc}S(\phi )=\frac{3c^2(c+k)}{(3c+2k)^2}\sqrt{\frac{c-2k}{c}}>0,\quad \mbox{ for any }c>2k>0.
\]
This completes  the proof Lemma \ref {convex}. \end{proof}

\bigskip

\noindent{\bf Acknowledgments}  The authors thank Professor Chongchun Zeng for stimulating conversation. The work of Li is partially supported by the NSFC grant 11771161. The work of Liu is partially supported by the Simons Foundation  grant 499875. The work of Wu is partially supported by the NSF grant DMS-1815079.  The work is initiated during Li's  visiting to the University of Texas at Arlington. Li thanks UT Arlington for great hospitality. Most of the work were done when  Liu and Wu were visiting Department of Mathematics,  Huazhong University of Science and Technology, who would like to thank the department for its warm hospitality.

\bibliographystyle{plain}

\end{document}